\theoremstyle{plain}
    \newtheorem{thm}{Theorem}[section]
    \newtheorem{theorem}[thm]{Theorem}
\theoremstyle{definition}
\theoremstyle{remark}
\title{\'Etale dynamical systems and topological entropy}
\author{Tuyen Trung Truong}
\address{School of Mathematical Sciences, The University of Adelaide, Adelaide SA 5005, Australia}
\email{tuyen.truong@adelaide.edu.au}
\thanks{The author was supported by Australian Research Council grants DP120104110 and DP150103442.}
\date{\today}
\begin{document}
\begin{abstract}
In this paper, we consider two questions about topological entropy of dynamical systems. We propose to resolve these questions by the same approach of using \'etale analogs of topological and algebraic dynamical systems.

The first question is to define topological entropy for a topological dynamical system $(f,X,\Omega )$. The main idea is to make use - in addition to invariant compact subspaces of $(X,\Omega )$ - of compactifications of \'etale covers $\pi :(f',X',\Omega ')\rightarrow (f,X,\Omega )$, that is $\pi \circ f'=f\circ \pi$ and the fibers of $\pi $ are all finite. We prove some basic results and propose a conjecture, whose validity allows us to prove further results. 

The second question is to define topological entropy for algebraic dynamical systems, with the requirement that it should be as close to the pullback on cohomology groups as possible. To this end, we develop an \'etale analog of algebraic dynamical systems. 

\end{abstract}
\maketitle

\section{The questions and main ideas}
The synthesis of this paper is to propose using ideas from \'etale topology to resolve two seemingly far away questions. Question 1: What is the topological entropy of a topological dynamical system on non-compact topological spaces? Question 2: What is the topological entropy of an algebraic dynamical system, in relation to Gromov - Yomdin's theorem?  

In this paper we study the topological entropy of dynamical systems $(f,X,\Omega )$, that is maps $f:X\rightarrow X$ continuous with respect to a topology $\Omega$ on $X$. In case $(X,\Omega )$ is compact, the topological entropy has been defined in the classical work of Adler, Konheim and McAndrew. When $(X,\Omega )$ is not compact, there have been many proposals, tailored for special cases such as when $(X,\Omega )$ is a homogeneous space, when it is a metric space or when the map $f$ is uniformly continuous. 

Given that topological entropy is abound in dynamical systems and that its idea is applied also to fields outside of mathematics (e.g. to DNA sequences \cite{koslicki}), it is important to make it straight what topological entropy should mean from a purely topological view that it should represent the complexity of general topological dynamical systems, when we do not add any further conditions or restrictions on the dynamical systems (such as smooth and so on). We study this question in the first section to come, by analysing a minimal list of properties that entropy should satisfy. Choosing compact topological spaces as the base case, we propose that entropy should be related to the behaviour of the map on compact invariant subspaces of $(X,\Omega )$ as well as on compactifications of \'etale covers $\pi :(f',X',\Omega ')\rightarrow (f,X,\Omega )$. We say that a dynamical system $(f',X',\Omega ')$ is an \'etale cover of $(f,X,\Omega )$ if there is a surjective continuous map $\pi :X'\rightarrow X$ with {\bf finite fibers} so that $\pi \circ f'=f\circ \pi$. This approach is, to our knowledge, not mentioned in previous works. We prove basic properties for the entropy so defined. 

When $(X,\Omega )$ is moreover a compact Riemannian manifold, there is a conjecture under the name of Entropy Conjecture (first stated by Shub and extended by Katok, see \cite{katok}), relating entropy to the pullback of $f$ on cohomology groups. While this conjecture fails in the general case, Gromov and Yomdin proved it for the case $(X,\Omega )$ is compact K\"ahler and $f:X\rightarrow X$ is holomorphic. Dinh and Sibony generalised the quantity in the Gromov - Yomdin's theorem to the case of meromorphic maps and more generally correspondences of compact K\"ahler manifolds (earlier Russakovskii and Shiffman proved the same result on complex projective spaces). These are now commonly called dynamical degrees.  We generalised the dynamical degrees to correspondences on algebraic varieties over an arbitrary field, by using algebraic cycles modulo numerical equivalence. Adopting the idea of using the graphs of the maps in the proof of Gromov's inequality, Friedland defined topological entropy for meromorphic maps of compact K\"ahler manifolds, and Dinh - Sibony extended this to correspondences of compact K\"ahler manifolds. The Gromov - Yomdin's theorem holds for some general classes of meromorphic maps of compact K\"ahler surfaces, however it fails in many examples. Over non-Archimedean fields, the study of morphisms of $\mathbb{P}^1$ is furnished by lifting it to the Berkovich space $P^1$, and this is a very active field. However, it was shown by Favre and Rivera-Letelier that the Yomdin's inequality fails in this case, that is the topological entropy while still be  always smaller than the Gromov - Yomdin's quantity, may be strictly smaller.   

In the second section to come, we propose a way to resolve this, by developing an algebraic \'etale analog of dynamical systems. This idea is inspired by several different facts. First, while over fields of positive characteristic  resolution of singularities by modifications are not available, by results of de Jong  resolutions of singularities by alterations always exist. Alterations are generically finite regular morphisms, hence are more general than \'etale morphism. Second, the dynamical degrees have been shown by Dinh and Nguyen to be an invariant for semi-conjugacies of rational maps of compact K\"ahler manifolds by a generically finite rational map, and we generalised this fact to correspondences over an arbitrary field. A generically finite rational map is more general than an alteration, since it may not be a regular morphism. Last, a result by Esnault and Srinivas showed that for an automorphism of a projective surface over a field of arbitrary characteristic, the dynamical degrees can be computed via the pullback of the map on $l$-adic cohomology groups. Here, again we see the appearance of \'etale maps. 

An \'etale view of dynamical systems requires that if we start with a dynamical system $f:X\rightarrow X$, we also need to consider the pullback dynamical systems $\pi ^*f:Z\rightarrow Z$, where $\pi :Z\rightarrow X$ is a generically finite meromorphic map. Here, even if $f$ is a rational map, the pullback $\pi ^*f$  in most cases is not a rational map but only a correspondence. Our previous work on relative dynamical  degrees of correspondences over an arbitrary field is applicable for this purpose. In case $X=\mathbb{P}^k$ is a projective space, since any projective variety $Z$ has such a map $\pi :Z\rightarrow X$, we are forced to consider correspondences over all algebraic varieties even if we are only concerned with dynamics over $\mathbb{P}^k$. Note also that since here we have the specific purpose of matching up the topological entropy and the dynamical degrees, we have to consider very special topologies when defining entropy. Every field $K$ becomes non-Archimedean under the trivial metric $d(x,y)=1$ if $x\not= y$ and $=0$ if $x=y$, and the topology is then the discrete one. From the general philosophy that the dynamics with respect to the discrete topology should have the most complexity, we propose that the corresponding Berkovich spaces may be useful in defining the topological entropy. 

{\bf Remark.} The approaches proposed for the topological entropy of topological and algebraic dynamical systems, as proposed above, have the common feature of using appropriate dynamical systems dominating the given dynamical system. These are very resemblant of the idea used by Grothendieck, that of replacing open subsets of $X$ by \'etale covers lying over $X$, to define \'etale topology. Hence, we may call these the \'etale dynamical systems. 

{\bf Acknowledgments.} We would like to thank Jan Boronski and Nessim Sibony for pointing out a mistake in a previous version of the paper. We thank Charles Favre for useful discussions. The valuable comments by Tien-Cuong Dinh and Nessim Sibony are also very much appreciated.    

\section{Topological \'etale dynamical systems and entropy}

Let $(X,\Omega )$ be a topological space, need not be compact. Let $f:(X,\Omega )\rightarrow (X,\Omega )$ be a continuous map. One important characterisation for the complexity of the dynamical behaviour of $f$ is its topological entropy. The entropy often dictates other important features of the dynamical system, such as invariant measures and the  growth of periodic points (see e.g. \cite{walters}). 

When $(X,\Omega )$ is compact, the topological entropy was defined by Adler, Konheim and McAndrew \cite{adler-konheim-mcandrew}, which relates to the measure theoretic entropy of Kolmogorov - Sinai \cite{kolmogorov, sinai} via the variational principle.

When $(X,\Omega )$ is Hausdorff but not compact, Bowen \cite{bowen} gave a definition of topological entropy by using the idea of Hausdorff dimension. He presented, however, a simple example due to L. Goodwyn of a map whose entropy, according to his definition, is $\infty$. There have been other definitions (see \cite{hasselbaltt-nitecki-propp} for relations between some common approaches), which use for example the Stone - Cech compactification in Hofer \cite{hofer} or the compact subsets of $X$ invariant by $f$ in Canovas - Rodriguez \cite{canovas-rodriguez}. Canovas and Rodriguez also mentioned in passing the generalisation to the case $X$ is not Hausdorff. Liu-Wang-Wei \cite{liu-wang-wei} developed in more details the definition in \cite{canovas-rodriguez} as follows. Let $f:(X,\Omega )\rightarrow (X,\Omega )$ be a continuous map, where $(X,\Omega )$ does not assume to have any special property. We define 
\begin{equation}
h_{CR}(f,\Omega ):=\sup \{h_{top}(f|_K)|~K~\mbox{is compact in }(X,\Omega ),~f(K)\subset K\}.
\label{Equation0}\end{equation}
 In the above, if there is no compact subset $K$ such that $f(K)\subset K$, we define $h_{CR}(f,\Omega ):=0$. While seeming reasonable and having various good properties, this definition has the drawback that the compact subsets of $(X,\Omega )$ may not be abundant enough to represent reasonably the complexity of a continuous map. Our new definition resolves this by adding into account also the compactifications (if exist) appropriate dynamical systems $(f,'X',\Omega ')$ having $(f,X,\Omega )$ as a topological quotient. 
 
 In this paper we propose, for a continuous map $f:(X,\Omega )\rightarrow (X,\Omega )$, a new notion of topological entropy $h_{\Omega}(f)$, which works for an arbitrary topology. We recall that given two topologies $\Omega ,\Omega '$ on $X$, then $\Omega '$  is finer than $\Omega $, denoted as $\Omega '>\Omega $, if every open set in $(X,\Omega )$ is also an open set in $(X,\Omega ')$. This is the same as that the identity map $\iota _X:(X,\Omega ')\rightarrow (X,\Omega )$, $\iota _X(x)=x$ for all $x\in X$, is continuous. We say that a dynamical system $(f',X',\Omega ')$ is a cover of $(f,X,\Omega )$ if there is a surjective continuous map $\pi :X'\rightarrow X$ so that $\pi \circ f'=f\circ \pi$. In other words, $(f',X',\Omega ')$ is a cover of $(f,X,\Omega )$ if and only if $(f,X,\Omega )$ is a topological quotient of $(f',X',\Omega ')$. We say that a cover $(f',X',\Omega ')$ of $(f,X,\Omega )$ is \'etale if moreover all the fibers of $\pi$ are finite. 

To motivate the definition, let us first describe four reasonable properties the new definition should satisfy, so that it represents reasonably the complexity of a continuous map. 

i) If $(X,\Omega )$ is compact, then $h_{\Omega }(f)=h_{top}(f)$, where the right hand side is the classical one. 

ii) Let $(X,\Omega )$ be embedded as a subspace in another topological space $(Z,\Omega _Z)$, where the map $f$ is the restriction of a continuous map $f_Z:(Z,\Omega _Z)\rightarrow (Z,\Omega _Z)$. Then, the complexity of the dynamical system $(f,X,\Omega )$, being a portion of the dynamical system $(f_Z,Z,\Omega _Z)$, is not more than that of $(f_Z,Z,\Omega _Z)$. Hence we should have $h_{\Omega }(f)\leq h_{\Omega _Z}(f_Z)$. 

iii) Let $(f',X',\Omega ')$ be an \'etale cover of $(f,X,\Omega )$. Then it is justified to view that the complexity of $(f',X',\Omega ')$ is not less than that of $(f,X,\Omega )$. Therefore, we should have $h_{\Omega '}(f')\geq h_{\Omega }(f)$. In particular, if $(f_Z,Z,\Omega _Z)$ is a compactification of $(f',X',\Omega ')$, that is $(Z,\Omega _Z)$ is a compact topological space containing $(X',\Omega ')$ as as dense subspace, then 
\begin{equation}
h_{\Omega }(f)\leq h_{top}(f_Z).
\label{Equation}\end{equation}

iv) The dynamical systems $(f_Z,Z,\Omega _Z)$ in iii) determines the complexity of $(f,X,\Omega )$ uniquely. Therefore, $h_{\Omega}(f)$ should be the largest number satisfying conditions i), ii) and iii). 

{\bf Remark.} In the situation of Requirement iii), if both $(X',\Omega ')$ and $(X,\Omega )$ are compact metric spaces, then by classical results we have that $h_{top}(f')=h_{top}(f)$. This is an additional justification for that the complexity of $(f,X,\Omega )$ should be determined by the complexity of its \'etale covers. 

A special case of \'etale covers is that when $f'=f$, $X'=X$ and $\Omega '$ is finer than $\Omega$. In this case, the justification for Requirement iii) is clearer: Since $f'=f$ on the set level, the only factor that determines the complexity is the topology.  

We now proceed to defining a topological entropy satisfying Requirements i) and ii), while satisfying Requirement iii) in some special cases. If Conjecture 1, to be stated below, holds, then Requirement iii) is also valid in general. For a dynamical system $(f,X,\Omega )$, we define by $\mathcal{C}(\Omega ,f)$ the set of all compactifications $(f_Z,Z,\Omega _Z)$. By definition, the latter are dynamical systems $(f_Z,Z,\Omega _Z)$ where $(Z,\Omega _Z)$ is a compact topological space and contains $(X,\Omega )$ as a dense subspace, and $f=f_Z|_X$. Define 
\begin{eqnarray*}
\mathcal{A}(\Omega ,f)&:=&\{(f_Z,Z,\Omega _Z) |~(f_Z,Z,\Omega _Z)\in  \mathcal{C}(\Omega ',f'),\\
&&~(f',X',\Omega ')~ \mbox{ is an \'etale cover of } ~(f,X,\Omega ) \}.
\end{eqnarray*}
  We note that $\mathcal{A}(\Omega ,f)$ is always non-empty because the discrete topology $\Omega _0$ on $X$, being normal Hausdorff, can be embedded as a dense subspace in its Stone - Cech compactifiaction where the continuous map $(f,X,\Omega _0)$ extends uniquely. On the other hand, the dynamical system $(f,X,\Omega )$  may not have any compactifications. Because of the similarity to the ideas used in \'etale topology, we call  $\mathcal{A}(\Omega ,f)$ the (topological) \'etale dynamical system associated with $(f,X,\Omega )$. An algebraic analog will be used in the next section.   

Our topological entropy is
\begin{equation}
h_{\Omega }(f):=\max\{h_{CR}(f,\Omega ),\inf _{(f_Z,Z,\Omega _Z)\in \mathcal{A}(\Omega ,f)}h_{top}(f_Z)\}. 
\label{Equation1}\end{equation}
The first term in the right hand side of Equation (\ref{Equation1}) may be viewed as the internal entropy of $f$ (by Requirement ii) it is bounded from above by $h_{\Omega }(f)$), while the second term may be viewed as the external entropy of $f$. If Requirement iii) is to be satisfied then the second term in the right hand side of Equation (\ref{Equation1}) should bound from above $h_{\Omega}$, which would be the case if it is not smaller than the first term. We suspect that this is the case, and propose Conjecture 1 below, whose validity allows us to prove that $h_{\Omega}(f)$ in fact satisfies Requirement iii) as desired. Without Conjecture 1, we will prove below that Requirement iii) is satisfied in some special cases. 

The topological entropy, defined by Equation (\ref{Equation1}), has the following good properties. 
\begin{theorem} Let $f:(X,\Omega )\rightarrow (X,\Omega )$ be a continuous map. 

1) (Compact space.) If $(X,\Omega )$ is compact, then $h_{\Omega}(f)=h_{top}(f)$.

 2) (Invariant subspace.) Let $(Y,\Omega |_Y)$ be a subspace of $(X,\Omega )$. Assume that $f(Y)\subset Y$. Then $h_{\Omega |_Y}(f|_Y)\leq h_{\Omega }(f)$. 

3) (Special \'etale cover.) Assume that $(f',X',\Omega ')$ is an \'etale cover of $(f,X,\Omega )$ via a surjective continuous map $\pi :(X',\Omega ')\rightarrow (X,\Omega )$. Assume moreover that $\pi$ is proper, that is the preimage of any compact set is a compact set. Then $h_{\Omega }(f)\leq h_{\Omega '}(f')$. 

4) (Iterates.) For all $n\in \mathbb{N}$, we have $h_{\Omega}(f^n)\leq nh_{\Omega}(f)$. 

5) (Product.) We have $$\max \{h_{\Omega _1}(f_1),h_{\Omega _2}(f_2)\}\leq h_{\Omega _1\times \Omega _2}(f_1\times f_2)\leq h_{\Omega _1}(f_1)+h_{\Omega _2}(f_2).$$
 
\label{Theorem1}\end{theorem}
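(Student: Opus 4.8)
\medskip
\noindent\emph{Strategy.} Write $I_\Omega(f):=\inf\{h_{top}(f_Z):(f_Z,Z,\Omega_Z)\in\mathcal{A}(\Omega,f)\}$, so that $h_\Omega(f)=\max\{h_{CR}(f,\Omega),I_\Omega(f)\}$, and in each item I would bound the two terms on the right separately. The proof relies on two packages of facts. The first is the classical behaviour of $h_{top}$ on compact spaces: it does not increase under a surjective continuous semi-conjugacy, nor under restriction to a closed invariant subset, and $h_{top}(g^n)=n\,h_{top}(g)$, $h_{top}(g_1\times g_2)=h_{top}(g_1)+h_{top}(g_2)$ (the first two require the usual care when the spaces are not Hausdorff, which I suppress here). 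The second is a set of elementary stability properties: a composition of surjective continuous finite-to-one maps is again such, so an \'etale cover of an \'etale cover of $(f,X,\Omega)$ is an \'etale cover of $(f,X,\Omega)$; the preimage under an \'etale cover $\pi$ of an invariant subspace $Y\subseteq X$ is an invariant subspace on which $\pi$ restricts to an \'etale cover of $(f|_Y,Y,\Omega|_Y)$; the iterate $((f')^n,X',\Omega')$ of an \'etale cover of $(f,X,\Omega)$ (via the same $\pi$) is an \'etale cover of $(f^n,X,\Omega)$, since $f_Z^n|_{X'}=(f')^n$ whenever $f_Z|_{X'}=f'$; and a product of \'etale covers is an \'etale cover of the product. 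The word ``compactification'' behaves the same way: a closed subspace of a compact space is compact, and a product of two compact spaces is compact.

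\medskip
\noindent For item (1), if $X$ is compact then $X$ itself (the identity cover) lies in $\mathcal{A}(\Omega,f)$, so $I_\Omega(f)\le h_{top}(f)$, while $h_{CR}(f,\Omega)=h_{top}(f)$ because $X$ is a compact invariant set and every compact invariant subset has entropy $\le h_{top}(f)$; hence $h_\Omega(f)=h_{top}(f)$. For item (3): given a compact invariant $K\subseteq X$, properness makes $\pi^{-1}(K)$ a compact invariant subset of $X'$ and $\pi$ restricted to it a surjective continuous semi-conjugacy onto $K$, so $h_{top}(f|_K)\le h_{top}(f'|_{\pi^{-1}(K)})\le h_{CR}(f',\Omega')\le h_{\Omega'}(f')$, whence $h_{CR}(f,\Omega)\le h_{\Omega'}(f')$; moreover every \'etale cover of $(f',X',\Omega')$ is one of $(f,X,\Omega)$, so $\mathcal{A}(\Omega',f')\subseteq\mathcal{A}(\Omega,f)$ and $I_\Omega(f)\le I_{\Omega'}(f')\le h_{\Omega'}(f')$; together these give $h_\Omega(f)\le h_{\Omega'}(f')$. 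For item (4): given a compact $K$ with $f^n(K)\subseteq K$, the compact set $\widetilde K:=K\cup f(K)\cup\dots\cup f^{n-1}(K)$ is both $f$- and $f^n$-invariant, so $h_{top}(f^n|_K)\le h_{top}(f^n|_{\widetilde K})=n\,h_{top}(f|_{\widetilde K})\le n\,h_{CR}(f,\Omega)$, giving $h_{CR}(f^n,\Omega)\le n\,h_{CR}(f,\Omega)$; and each $(f_Z,Z,\Omega_Z)\in\mathcal{A}(\Omega,f)$ produces $(f_Z^n,Z,\Omega_Z)\in\mathcal{A}(\Omega,f^n)$ of entropy $n\,h_{top}(f_Z)$, so $I_\Omega(f^n)\le n\,I_\Omega(f)$; hence $h_\Omega(f^n)\le n\,h_\Omega(f)$.

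\medskip
\noindent For item (2): a compact invariant $K\subseteq Y$ is a compact invariant subset of $X$, so $h_{CR}(f|_Y,\Omega|_Y)\le h_{CR}(f,\Omega)\le h_\Omega(f)$; and if $(f_Z,Z,\Omega_Z)$ compactifies an \'etale cover $(f',X',\Omega')$ of $(f,X,\Omega)$ via $\pi$, then $\pi^{-1}(Y)$ is an invariant subspace with $(f'|_{\pi^{-1}(Y)},\pi^{-1}(Y))$ an \'etale cover of $(f|_Y,Y,\Omega|_Y)$, and the closure of $\pi^{-1}(Y)$ in $Z$ is a compact invariant set densely containing it, hence an element of $\mathcal{A}(\Omega|_Y,f|_Y)$ of entropy $\le h_{top}(f_Z)$; taking the infimum over $\mathcal{A}(\Omega,f)$ gives $I_{\Omega|_Y}(f|_Y)\le I_\Omega(f)$, so $h_{\Omega|_Y}(f|_Y)\le h_\Omega(f)$. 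For the upper bound in (5): a compact invariant $L\subseteq X_1\times X_2$ has compact invariant projections $K_1,K_2$ with $L\subseteq K_1\times K_2$, so $h_{top}((f_1\times f_2)|_L)\le h_{top}(f_1|_{K_1})+h_{top}(f_2|_{K_2})$, giving $h_{CR}(f_1\times f_2)\le h_{CR}(f_1)+h_{CR}(f_2)$; and a product of elements of $\mathcal{A}(\Omega_1,f_1)$ and $\mathcal{A}(\Omega_2,f_2)$ lies in $\mathcal{A}(\Omega_1\times\Omega_2,f_1\times f_2)$ with entropy the sum of the two, so $I_{\Omega_1\times\Omega_2}(f_1\times f_2)\le I_{\Omega_1}(f_1)+I_{\Omega_2}(f_2)$; together these give the upper bound $h_{\Omega_1\times\Omega_2}(f_1\times f_2)\le h_{\Omega_1}(f_1)+h_{\Omega_2}(f_2)$.

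\medskip
\noindent The lower bound in (5) is where the real work lies, since the projection $p_1\colon X_1\times X_2\to X_1$ is a cover but not an \'etale one, so (3) does not apply. The contribution of $h_{CR}(f_1)$ I would handle by a graph-closure argument: for a compact invariant $K_1\subseteq X_1$ and any $(F_W,W)\in\mathcal{A}(\Omega_1\times\Omega_2,f_1\times f_2)$, say $W$ compactifies the \'etale cover $(g,Y)\to(f_1\times f_2,X_1\times X_2)$ via $\rho$, set $q_1:=p_1\circ\rho$ and let $G$ be the closure in the compact space $W\times K_1$ of $\{(y,q_1(y)):y\in\rho^{-1}(K_1\times X_2)\}$; one checks that $F_W\times(f_1|_{K_1})$ preserves $G$, that the projection $G\to K_1$ is a surjective continuous semi-conjugacy, and that the projection $G\to W$ maps $G$ onto a compact invariant subset of $W$, so that $h_{top}(f_1|_{K_1})\le h_{top}(F_W)$; taking a supremum over $K_1$ and an infimum over $W$ (and symmetrically for $f_2$) yields $\max\{h_{CR}(f_1),h_{CR}(f_2)\}\le I_{\Omega_1\times\Omega_2}(f_1\times f_2)\le h_{\Omega_1\times\Omega_2}(f_1\times f_2)$. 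The remaining point, which I expect to be the genuine obstacle, is $\max\{I_{\Omega_1}(f_1),I_{\Omega_2}(f_2)\}\le h_{\Omega_1\times\Omega_2}(f_1\times f_2)$: one must produce, from an arbitrary compactification of an \'etale cover of $f_1\times f_2$, a compactification of an \'etale cover of $f_1$ of no larger entropy, and the naive push-forward along $p_1$ destroys finiteness of the fibres --- there is even a cardinality obstruction to re-using the same total space when $X_2$ is infinite. This is the step I would concentrate on; it presumably needs a more delicate construction (closures of graphs combined with a carefully chosen \'etale cover of $X_1$, or the extra structure carried by $W$), and it is also where the non-Hausdorff subtleties in the classical entropy facts above would need to be pinned down.
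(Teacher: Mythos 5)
Your treatment of parts 1)--4) and of the upper bound in 5) is correct and follows the same route as the paper; in fact it is more complete than what is printed, since the paper disposes of 4) and 5) with the single sentence that the proofs are ``similar,'' and in 2) it only discusses compactifications of $(f,X,\Omega)$ itself rather than compactifications of its \'etale covers. Your versions (passing to $\pi^{-1}(Y)$ and its closure in $Z$, to $(f_Z^n,Z,\Omega_Z)$, and to products of compactifications of \'etale covers) supply exactly the missing routine steps, and the stability facts you isolate (composites and products of finite-to-one surjective semi-conjugacies, restriction of an \'etale cover over an invariant subspace) are what the paper implicitly uses.

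The genuine gap is the lower bound in 5), and you have located it correctly --- but your partial argument for it does not work, and the remaining half cannot be supplied because the inequality is false as stated. In the graph-closure step you exhibit two surjective semi-conjugacies $G\to K_1$ and $G\to W_0\subseteq W$ and conclude $h_{top}(f_1|_{K_1})\le h_{top}(F_W)$; but each of these only bounds $h_{top}\bigl((F_W\times f_1)|_G\bigr)$ \emph{from below}, and to descend from $G$ to $W_0$ you would need control of the fibre entropy of $G\to W_0$, which fails: the fibres over boundary points of $W$ can be all of $K_1$ and carry all the entropy. Concretely, let $f_1=\sigma$ be the full shift on $X_1=\{0,1\}^{\mathbb{N}}$ and $f_2(t)=t+1$ on $X_2=\mathbb{R}$ with the usual topology. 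Then $h_{\Omega_1}(f_1)=\log 2$ by part 1), but $X_1\times\mathbb{R}$ has no nonempty compact invariant subset (project to $\mathbb{R}$), so $h_{CR}(f_1\times f_2)=0$; and the compactification $W$ obtained from $X_1\times[-\infty,+\infty]$ by collapsing each of $X_1\times\{+\infty\}$ and $X_1\times\{-\infty\}$ to a fixed point is a compact metrizable system whose invariant measures all sit on those two fixed points, so $h_{top}(F_W)=0$ and hence $h_{\Omega_1\times\Omega_2}(f_1\times f_2)=0<\log 2$. (In this example your $G$ contains $\{p_{+\infty}\}\times X_1$, on which $F_W\times f_1$ acts as the shift --- precisely the fibre that breaks the deduction.) So no argument could have closed this step; the first inequality of 5) needs to be qualified or dropped, and the paper itself offers no proof of it.
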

\begin{proof}
1) If $(X,\Omega )$ is compact, then $(f,X,\Omega )$ itself belongs to $\mathcal{A}_{\Omega ,f}$. Hence, 
\begin{eqnarray*}
h_{top}(f,\Omega )\geq \inf _{(f_Z,Z,\Omega _Z)\in \mathcal{A}(\Omega ,f)}h_{top}(f_Z).
\end{eqnarray*}
On the other hand, in this case $h_{CR}(f,\Omega )=h_{top}(f,\Omega )$. Hence we have 1). 

2) If $K$ is a compact subset of $(Y,\Omega |_Y)$, then $K$ is also a compact subset of $(X,\Omega )$. Hence, by definition, we have $h_{CR}(f|_Y,\Omega |_Y)\leq h_{CR}(f,\Omega )$. 

If $(f,X,\Omega )$ has a compactification $(f_Z,Z,\Omega _Z)$, then $(f|_Y,Y,\Omega |_Y)$ has a compactification $(f_Z|_{Y'},Y',\Omega _Z|_{Y'})$, where $Y'$ is the closure of $Y$ in $Z$. By properties of topological entropy for compact  spaces, we have $h_{top}(f_Z)\geq h_{top}(f_Z|_{Y'})$. From this, we have 2). 

3) Because $(f',X',\Omega ')$ is an \'etale cover of $(f,X,\Omega )$, whenever $(f_Z,Z,\Omega _Z)$ is in $\mathcal{A}(\Omega ',f')$ it also belongs to $\mathcal{A}(\Omega ,f)$. Therefore, 
\begin{eqnarray*}
\inf _{(f_Z,Z,\Omega _Z)\in \mathcal{A}(\Omega ,f)}h_{top}(f_Z)\leq \inf _{(f_{Z'},Z',\Omega _{Z'})\in \mathcal{A}(\Omega ',f')}h_{top}(f_{Z'}).
\end{eqnarray*}
The assumption that $\pi$ is proper implies that $h_{CR}(f,\Omega )\leq h_{CR}(f',\Omega ')$. Hence, we obtain 3). 

The proofs of parts 4) and 5) are similar. 
 \end{proof}

Now we discuss a conjecture in connection to Requirement iii).

{\bf Conjecture 1.} Let $(f',X',\Omega ')$ be an \'etale cover of $(f,X,\Omega )$. Assume that $(X,\Omega )$ is compact, and $(f_Z,Z,\Omega _Z)$ is a compactification of $(f',X',\Omega ')$. Then we have $h_{top}(f)\leq h_{top}(f_Z)$. 

{\bf Remark.} While this conjecture seems reasonable, at the moment we have no idea about its validity.  Note that if in the above $(f',X',\Omega )$ is merely a cover of $(f,X,\Omega )$ then Conjecture 1 fails in general. A simple counterexample, mentioned in \cite{canovas-rodriguez}, is the following: $X'=\mathbb{R}$ and $X=S^1$ with the usual topologies, $f'(x)=2x$, $f(x)=2x$, and $\pi :X'\rightarrow X$ is the universal covering map. Then $h_{top}(f)=\ln (2)>0$, while $(f',X',\Omega ')$ has a compactification whose topological entropy is $0$. Here, all fibers of $\pi$ are infinite.

The following result is proven under the assumption that Conjecture 1 holds. 

\begin{theorem}
Assume that Conjecture 1 holds. Then, we have 
$$h_{CR}(f,\Omega )\leq \inf _{(f_Z,Z,\Omega _Z)\in \mathcal{A}(\Omega ,f)}h_{top}(f_Z).$$
As a consequence, we obtain the following property of $h_{\Omega}(f)$:

(Topological quotient.) If $(f',X',\Omega ')$ is an \'etale cover of $(f,X,\Omega )$, then $h_{\Omega }(f)\leq h_{\Omega '}(f')$.
\label{Theorem2}\end{theorem}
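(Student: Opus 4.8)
The plan is to prove the displayed inequality $h_{CR}(f,\Omega)\le\inf_{(f_Z,Z,\Omega_Z)\in\mathcal{A}(\Omega,f)}h_{top}(f_Z)$ by taking an arbitrary element of $\mathcal{A}(\Omega,f)$ and an arbitrary compact invariant subset $K$ of $(X,\Omega)$, and showing that $h_{top}(f|_K)\le h_{top}(f_Z)$; taking the supremum over $K$ and then the infimum over $\mathcal{A}(\Omega,f)$ then yields the claim. So fix $(f_Z,Z,\Omega_Z)\in\mathcal{A}(\Omega,f)$. By definition of $\mathcal{A}(\Omega,f)$, there is an \'etale cover $\pi:(f',X',\Omega')\to(f,X,\Omega)$ and a compactification $(f_Z,Z,\Omega_Z)$ of $(f',X',\Omega')$, so $(X',\Omega')$ sits densely in the compact space $(Z,\Omega_Z)$ and $f_Z|_{X'}=f'$.

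Next I would pull the compact invariant set $K$ back along $\pi$. Set $K':=\pi^{-1}(K)\subset X'$. Since the fibers of $\pi$ are finite and $\pi$ is continuous and surjective onto $K$, the restriction $\pi|_{K'}:K'\to K$ is a continuous surjection with finite fibers, and $f'(K')\subset K'$ because $\pi\circ f'=f\circ\pi$ and $f(K)\subset K$. Now let $\overline{K'}$ denote the closure of $K'$ in $(Z,\Omega_Z)$; this is a compact subset of $Z$, it is $f_Z$-invariant (the closure of an invariant set under a continuous self-map of a compact space is invariant), and $K'$ is dense in it. By the subspace property of topological entropy for compact spaces, $h_{top}(f_Z|_{\overline{K'}})\le h_{top}(f_Z)$. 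Thus it suffices to show $h_{top}(f|_K)\le h_{top}(f_Z|_{\overline{K'}})$.

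The remaining step is where Conjecture 1 enters: $(f',K',\Omega'|_{K'})$ is, by the previous paragraph, an \'etale cover of $(f|_K,K,\Omega|_K)$ with $K$ compact, and $(f_Z|_{\overline{K'}},\overline{K'},\Omega_Z|_{\overline{K'}})$ is a compactification of $(f',K',\Omega'|_{K'})$. Applying Conjecture 1 to this data gives exactly $h_{top}(f|_K)\le h_{top}(f_Z|_{\overline{K'}})$, and chaining the two inequalities closes the argument. For the ``as a consequence'' clause: if $(f',X',\Omega')$ is an \'etale cover of $(f,X,\Omega)$, then as already noted in the proof of Theorem~\ref{Theorem1}(3) every compactification in $\mathcal{A}(\Omega',f')$ also lies in $\mathcal{A}(\Omega,f)$, hence $\inf_{\mathcal{A}(\Omega,f)}h_{top}\le\inf_{\mathcal{A}(\Omega',f')}h_{top}$; combining this with the just-proved inequality $h_{CR}(f,\Omega)\le\inf_{\mathcal{A}(\Omega,f)}h_{top}$ and with $h_{CR}(f,\Omega)\le h_{CR}(f',\Omega')\le h_{\Omega'}(f')$, one reads off from the definition \eqref{Equation1} that $h_{\Omega}(f)=\max\{h_{CR}(f,\Omega),\inf_{\mathcal{A}(\Omega,f)}h_{top}\}\le h_{\Omega'}(f')$.

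The main obstacle I anticipate is not in the reduction itself, which is essentially formal, but in verifying that the pulled-back and closed-up data genuinely satisfies the hypotheses of Conjecture 1 --- in particular that $\pi|_{K'}:K'\to K$ really is an \'etale cover in the sense of the paper (surjective, continuous, finite fibers) and that passing to closures in $Z$ does not destroy invariance or compactness; once that bookkeeping is done the conjecture does all the real work. A secondary subtlety is making sure the inequality survives the two suprema/infima cleanly, i.e. that the bound $h_{top}(f|_K)\le h_{top}(f_Z)$ holds uniformly for every compactification in $\mathcal{A}(\Omega,f)$ before taking the infimum, which it does since $K$ and $(f_Z,Z,\Omega_Z)$ were chosen independently.
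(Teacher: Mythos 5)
Your argument is essentially the paper's own proof: pull the compact invariant set $K$ back along the \'etale cover, take the closure of $\pi^{-1}(K)$ in the compactification, apply Conjecture 1 to the restricted data to get $h_{top}(f|_K)\leq h_{top}(f_Z|_{\overline{K'}})\leq h_{top}(f_Z)$, and then take the supremum over $K$ and the infimum over $\mathcal{A}(\Omega,f)$; the deduction of the topological quotient property via $\mathcal{A}(\Omega',f')\subset\mathcal{A}(\Omega,f)$ is also the intended one. One small remark: the side inequality $h_{CR}(f,\Omega)\leq h_{CR}(f',\Omega')$ you invoke at the end is not justified without assuming $\pi$ proper (cf.\ Theorem \ref{Theorem1}(3)), but it is redundant, since the chain $h_{CR}(f,\Omega)\leq\inf_{\mathcal{A}(\Omega,f)}h_{top}\leq\inf_{\mathcal{A}(\Omega',f')}h_{top}\leq h_{\Omega'}(f')$ already gives the conclusion.
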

\begin{proof}
Let $(f_Z,Z,\Omega _Z)$ be an element of $\mathcal{A}(\Omega ,f)$. Then $(f,X,\Omega )$ is a topological quotient of some $(f',X',\Omega ')$ via a surjective continuous map $\pi :(X',\Omega ')\rightarrow (X,\Omega )$, whose fibers are all finite. Let $K$ be a compact subset in $(X,\Omega )$ so that $f(K)\subset K$. Let $K'$ be the closure in $(Z,\Omega _Z)$ of $\pi ^{-1}(K)$. Then $(f_Z|_{K'},K',\Omega _Z|_{K'})$ is a compactification of $(f'|_{\pi ^{-1}(K)},\pi ^{-1}(K),\Omega '|_{\pi ^{-1}(K)})$. Since $\pi$ is surjective and has finite fibers, it follows that $(f_Z|_{\pi ^{-1}(K)},\pi ^{-1}(K),\Omega _Z|_{\pi ^{-1}(K)})$ is an \'etale cover of $(f|_K,K,\Omega |_K)$.  If Conjecture 1 holds, we have $h_{top}(f|_K)\leq h_{top}(f_Z|_{K'})$, and the latter is not greater than $h_{top}(f_Z)$. Taking supremum over all such $K$'s, we obtain $h_{CR}(f)\leq h_{top}(f_Z)$. Taking infimum over all such $(f_Z,Z,\Omega _Z)$, we obtain the desired result.

\end{proof}

{\bf Comparisons with several previous works.}

Assume that $(X,\Omega )$ is Hausdorff. The definition by Hofer \cite{hofer} is $h_{Hof}(f):=h_{top}(f_{SC(X,\Omega )})$ where $SC(X,\Omega )$ is the Stone - Cech compactification of $(X,\Omega )$ and $f_{SC(X,\Omega )}:SC(X,\Omega )\rightarrow SC(X,\Omega )$ is the extension of $f:(X,\Omega )\rightarrow (X,\Omega )$. It can be checked easily that $h_{Hof}(f)\geq h_{\Omega }(f)$. However, $h_{Hof}$ is usually very big to be useful, since it is bigger than the topological entropy of any compactification of $(f,X,\Omega )$, even in simple situations. For example, let $X=\mathbb{N}$ and $f(x)=x+1$. Then it is shown in \cite{hofer} that $h_{Hof}(f)=\infty$. On the other hand, $\mathbb{N}$ is a subset of $\mathbb{C}$, and the map $f$ is the restriction of the map $f':\mathbb{C}\rightarrow \mathbb{C}$, where $f'(x)=x+1$. The topology $\Omega$ is the usual topology on $\mathbb{C}$. The map $f'$ has a compactification in $Z=\mathbb{P}^1$, and the topological entropy of the extension $f_Z$ is $0$.  It is also shown in \cite{canovas-rodriguez} that $h_{CR}(f,\Omega )=0$. Therefore, we have $h_{\Omega }(f)=0$. Since the map $f$ is really simple in this case, the value $0$ is more reasonable to be assigned as the topological entropy of $f$. By the same arguments, we can see that the entropy for the example of Goodwyn, as mentioned in \cite{bowen}, is also $0$. The same applies also for another example, $f(x)=2x$ from $\mathbb{R}\rightarrow \mathbb{R}$, whose entropy according to Hofer is again $\infty$. According to another definition by Bowen \cite{bowen1}, the entropy for this same map should be $\geq \ln (2)$, since the map $f$ is homogeneous. However, since there is no priority for which compactification of $\mathbb{R}$ we should choose, in particular we can choose $\mathbb{P}^1$, it seems more reasonable for the entropy of this map to be $0$. 

The definition by Canovas - Rodriguez \cite{canovas-rodriguez} and Liu-Wang-Wei \cite{liu-wang-wei} on the other hand may be too small sometimes to be useful. For example, let $f:X\rightarrow X$ be any map. Let $\Omega _0$ be the discrete topology on $X$. Since any compact subset $K$ of $(X,\Omega _0)$ has only a finite number of elements, it follows that $h_{CR}(f,\Omega _0)=0$.  If Conjecture 1 holds and there is a topology $\Omega $ on $X$ such that $(X,\Omega )$ is compact Hausdorff and $f$ is continuous with respect to $\Omega$, then by Theorem \ref{Theorem2} we obtain $h_{\Omega _0}(f)\geq h_{top}(f,\Omega )$, and hence is positive provided that $h_{top}(f,\Omega )>0$.  For another example, consider any map $f:\mathbb{R}\rightarrow \mathbb{R}$ satisfying $f(x)>x$, and $\Omega $ is the usual topology on $\mathbb{R}$. Then there is no compact set $K\subset \mathbb{R}$ which is invariant by $f$, and hence $h_{CR}(f,\Omega )=0$.  Note that by definition we always have $h_{\Omega }(f)\geq h_{CR}(f,\Omega )$.   

\section{Algebraic \'etale dynamical systems and entropy} In this subsection, we develop an algebraic analog of the topological \'etale dynamical systems used in the previous section to study topological entropy of algebraic dynamical systems. Since our purpose here is to seek for a topological entropy for which the Gromov - Yomdin 's theorem holds, we need to choose special topologies. Thus, the definition in this case is apparently different from that in the topological case. It is, however, still interesting to compare the two definitions for a regular morphism $f:X\rightarrow X$ of smooth algebraic varieties.

Let $X$ be a connected compact K\"ahler manifold and $f:X\rightarrow X$ a surjective endomorphism. For $p=0,\ldots ,\dim (X)$, let $\lambda _p(f)$ be the $p$-the dynamical degree, defined as the spectral radius of the pullback on cohomology $f^*:H^{p,p}(X)\rightarrow H^{p,p}(X)$. Gromov \cite{gromov} and Yomdin \cite{yomdin} proved that the topological entropy, $h_{top}(f)$, of $f$ can be computed via the formula
\begin{eqnarray*}
h_{top}(f)=\max _{p=0,\ldots ,\dim (X)}\log \lambda _p(f).
\end{eqnarray*}  
Gromov proved the inequality $h_{top}(f)\leq \max _{p=0,\ldots ,\dim (X)}\log \lambda _p(f)$ by relating topological entropy to the graphs of the iterates of $f$. Friedland \cite{friedland} adopted this to define topological entropy for meromorphic maps, and another definition following Bowen was used by Dinh and Sibony \cite{dinh-sibony2} to study quasi-polynomial maps. These definitions have been developed in more general settings, such as foliations (see \cite{dinh-nguyen-sibony}) and correspsondences (see \cite{dinh-sibony3}).  Guedj \cite{guedj} showed that these two definitions are the same (note that the definitions in \cite{guedj} are a little different from those used by Friedland and Dinh - Sibony). Dinh and Sibony \cite{dinh-sibony10, dinh-sibony1} (and earlier, for the case of complex projective spaces, in \cite{russakovskii-shiffman}) defined dynamical degrees for meromorphic maps of compact K\"ahler manifolds, and proved Gromov's inequality in this general context. Hence, dynamical degrees can be viewed as a generalisation of the Gromov - Yomdin's quantity, which we may call the cohomological entropy of a meromorphic map. However, the Yomdin's inequality fails for meromorphic maps, see \cite{guedj}. 

Next, we consider a dominant polynomial map $f:\mathbb{C}^k\rightarrow \mathbb{C}^k$. When $k=1$, it is a classical result of Brolin, Freie-Lopes-Mane, Lyubich and Tortrat, that the topological entropy of the lift of $f$ to $\mathbb{P}^1$ satisfies the Gromov - Yomdin's theorem. When $k=2$,  Diller, Dujardin and Guedj \cite{diller-dujardin-guedj3}, extending earlier pioneering work by Bedford - Lyubich - Smillie, Fornaess - Sibony, Friedland - Milnor and Douady - Hubbard, showed that there is a smooth compactification $X$ (constructed earlier by Favre and Jonsson \cite{favre-jonsson}) of $\mathbb{C}^2$ for which the topological entropy of the lifting of (some iterates of) $f$ satisfies Gromov - Yomdin's theorem. For higher $k$, the Gromov - Yomdin's theorem is proven for only special classes of polynomial maps. While it is folklore conjectured that a rational map $f:X\rightarrow X$ having some reasonable conditions (in particular, it must be algebraic stable in the sense that the  $(f^n)^*=(f^*)^n$ for all $n\in \mathbb{N}$ as linear maps on the cohomology group $H^*(X)$) will have good dynamical properties, in particular will satisfy the Gromov - Yomdin's theorem, the conjecture is currently known mostly only in dimension $2$ for some general classes of maps and for some special classes of maps in higher dimensions. A common strategy is to find, for a given rational map $f:X\rightarrow X$, a birational model for which the dynamics behave nicely. However, this seems hopeless in dimension at least $3$, and is more so over fields of positive characteristics where resolution of singularities is not available.   

Let $K$ be a field which is  non-Archimedean and algebraically closed. Any rational map $f:\mathbb{P}^1_K\rightarrow \mathbb{P}^1_K$ lifts to a continuous map $f:P^1\rightarrow P^1$, where $P^1$ is the Berkovich space of $\mathbb{P}^1_K$. Studying the dynamics of such maps is a very active field. In this case, however, the Yomdin's inequality still does not hold in general, see \cite{favre-letelier}.  

In previous work \cite{truong}, we have defined (relative) dynamical degrees, for rational maps and more general correspondences, of algebraic varieties (not necessarily smooth or compact) over an arbitrary field, in terms of algebraic cycles modulo numerical equivalences. These can be viewed as the geometric entropy of a rational map. In the algebraic setting, over fields $K$ different from $\mathbb{C}$, there is still no definition of cohomological entropy. There are indications \cite{dinh-nguyen, esnault-srinivas, truong} that one should be willing to work with objects such as \'etale topology  - seemingly further away from dynamical systems - in order to come closer to satisfying Gromov - Yomdin 's theorem. In fact, for automorphisms of compact surfaces, the work in \cite{esnault-srinivas} relates the quantity in Gromov - Yomdin's theorem to $l$-adic cohomology, which suggests that we may even need to deal with stranger objects than \'etale topology. Even when $K=\mathbb{C}$, the fact that the dynamical degrees are invariant under generically finite semi-conjugacies (\cite{dinh-nguyen}, see \cite{truong} for a generalisation to correspondences) is an evidence in support of the philosophy that to understand the dynamics of a rational map $f:X\rightarrow X$, it is useful and necessary to study all rational maps $f':X'\rightarrow X'$ (or even correspondences) which are semi-conjugate to $f$ via a generically finite rational map $\pi :X'\rightarrow X$. (Note that the latter maps are more general than isogenies.) Over fields of positive characteristic, the fact that singularities can be resolved by alterations is an additional support. In other words we should work with an \'etale analog of dynamical systems.  

Following the idea of \'etale topology, we proceed as follows. First, we consider the case where $K=\mathbb{C}$. Let $f:X\rightarrow X$ be a dominant rational map or more generally dominant correspondence, where $X$ is a complex projective. Let $\mathcal{B}(f)$ be the set of all dominant correspondences $f_Z:Z\rightarrow Z$, where $Z$ is a complex projective variety so that $\pi _Z\circ f_Z=d_Z f\circ \pi _Z$, where $\pi _Z:Z\rightarrow X$ is a generically finite rational map and $d_Z$ is a positive integer. (Even if we start with rational maps $f$, we still need to consider correspondences in general.) The \'etale analog of dynamical systems is the dynamics of each element in this family of correspondences $f_Z$. The \'etale analog of the dynamical degrees is the supremum of the (weighted) dynamical degrees of this family $f_Z$, which is in fact the same as that for each individual $f_Z$, see \cite{dinh-nguyen, truong}.  We then define the \'etale analogue of entropy
\begin{eqnarray*}
h_{et}(f):=\sup _{Z\in\mathcal{B}(f)}[ h_{top}(f_Z)-\ln (d_Z)],
\end{eqnarray*}
 where the $h_{top}$ in the right hand side is the topological entropy defined in \cite{dinh-sibony3}. The expression in the bracket in the above equation is the weighted topological entropy of $f_Z:Z\rightarrow Z$, defined in analog to the weighted dynamical degrees of $f_Z$. Note that we can choose $Z=X$, and hence $h_{eta}(f)\geq 0$. Moreover, whenever $\pi _Z:Z\rightarrow X$ is a generically finite rational map, we can always find such a correspondence $f_Z$ (the pullback of $f$ by $\pi _Z$, see \cite{truong} for more details), where $d_Z=\deg (\pi _Z)$. By the results in \cite{dinh-sibony3}, we have that $h_{eta}(f)$ is bounded from above in terms of the dynamical degrees of $f$. More precisely, 
\begin{eqnarray*}
h_{et}(f)\leq \max _{0\leq j\leq \dim (X)}\log \lambda _j(f),
\end{eqnarray*}
where $\lambda _j(f)$'s are the dynamical degrees of $f$. We conjecture that the equality happens. 

Although many things are known for the dynamics of algebraic stable maps on complex projective surfaces, the dynamics of non-algebraic stable  maps are mostly unexplored. The above idea may be used as follows. Let us start with a dynamical system $(f,X)$ on a complex projective surface. If $h_{top}(f)$ is not equal to the Gromov - Yomdin's quantity, then we can try to find a generically finite rational map $\varphi :Z\rightarrow X$ so that the entropy of the correspondence $f_Z=\varphi ^*(f)$ is strictly bigger than the sum of the entropy of $f$ and $\ln (\deg \varphi )$. We may in particular choose $Z$ to be singular, and in this case $\varphi$ may be tried to choose to be birational. 

For an algebraically closed field $K$ of arbitrary characteristic, we can proceed as above. More precisely, for a dominant correspondence $f:X\rightarrow X$ where $X$ is an irreducible algebraic variety, we let $\Gamma _f=\sum _{j}\Gamma _j$ be the irreducible decomposition of the graph of $f$, and $\pi _1,\pi _2:X\times X\rightarrow X$ the two projections. We next define $I_1(f)=\{x\in X:~\dim \pi _1^{-1}(x)\cap \Gamma _f\}\geq 1$.  Let $\Gamma _{\infty}=\{(x_m,i_m)_{m=1}^{\infty}\in X^{\mathbb{N}}:~x_m\notin I_1(f)~\forall ~m, ~(x_m,x_{m+1})\in \Gamma _m \}$. We have a shifting map $\sigma $ on $\Gamma _{\infty}$. Let $\Omega $ be a Hausdorff topology on $X$, finer than the Zariski topology, and assume that $(X,\Omega )$ has a favourite compactification $(\widetilde{X},\widetilde{\Omega})$.  Let $X_{\infty}$ be the closure of $\Gamma _{\infty}$ in $(\widetilde{X},\widetilde{\Omega})^{\mathbb{N}}$. We have a shifting map $\sigma _{\infty}:~X_{\infty}\rightarrow X_{\infty}$. Then, we define
\begin{eqnarray*}
h_{top}(f,X):=h_{top}(\sigma _{\infty}). 
\end{eqnarray*}
Now, we define $\mathcal{B}'(f)$ to be the set of dominant correspsondences $f_Z:Z\rightarrow Z$, where $f_Z:Z\rightarrow Z$ is a correspondence and $Z$ is an algebraic variety so that $\pi _Z\circ f_Z=d_Z f\circ \pi _Z$, where $\pi _Z:Z\rightarrow X$ is a generically finite rational map and $d_Z$ is a positive integer. (Note that in the case of positive characteristic, we probably need to restrict to only the case where $f_Z=\pi _Z^*(f)$ is the pullback of $f$. This is because, in the lack of resolution of singularities, in \cite{truong} we are able to prove that dynamical degrees of $f_Z$ are the multiplication of that of $f_Z$ and the topological degree $d_Z$ only in the special case where $f_Z=\pi _Z^*(f)$.) Finally, we define
\begin{eqnarray*}
h_{et}(f):=\sup _{f_Z\in \mathcal{B}'(f)}[h_{top}(f_Z,Z)-\ln (d_Z)]. 
\end{eqnarray*}
 As before we have that $h_{et}(f,\Omega )\geq 0$. However, in the general case, as mentioned before, there is only the geometric version of dynamical degrees \cite{truong}. 
  
 Assume that $K$ is non-Archimedean. Then a favourite compactification is to use the Berkovich space. In the special case $X=\mathbb{P}^1$, \cite{favre-letelier} showed that the topological entropy of (the lifting) of a rational map $P/Q$ on the Berkovich space $P^1$ is zero if and only if there is a change of coordinates such that $P/Q$ has good reduction. One may start with one such example, and then try to show that there is a generically finite rational map $\varphi $ such that the entropy of the correspondence $f_Z=\varphi ^*(f)$ is strictly bigger than the sum of the entropy of $f$ (which in this case is $0$) and $\ln (\deg \varphi )$. Again, we may try first the case where $Z$ is a singular rational curve, and may use the trivial metric on $K$. If we want to keep $Z$ to be a smooth projective curve, then the use of a rational map of degree $>2$ ,and hence correspondences, is unavoidable in general.  
 
 As suggested by the work of Esnault - Srinivas \cite{esnault-srinivas}, we may need to consider topological entropy of more general objects than \'etale dynamical systems.

\end{document}